
\documentclass{amsart}%
\usepackage{amssymb}
\usepackage{amsfonts}
\usepackage{amsmath}
\usepackage[toc,page]{appendix}
\usepackage{graphicx}%
\setcounter{MaxMatrixCols}{30}
\providecommand{\U}[1]{\protect \rule{.1in}{.1in}}
\newtheorem{theorem}{Theorem}[section]

\newtheorem{axiom}{Axiom}

\newtheorem{definition}[theorem]{Definition}
\newtheorem{example}[theorem]{Example}

\newtheorem{lemma}[theorem]{Lemma}

\newtheorem{proposition}[theorem]{Proposition}
\newtheorem{remark}[theorem]{Remark}

\begin{document}
\title{On the quotient class of non-archimedean fields}
\author{Bruno Dinis}
\address[B. Dinis]{Departamento de Matem\'{a}tica, Faculdade de Ci\^{e}ncias da
Universidade de Lisboa, Portugal.}
\email{bmdinis@fc.ul.pt}
\author{Imme van den Berg}
\address[I.P. van den Berg]{Departamento de Matem\'{a}tica, Universidade de \'{E}vora, Portugal}
\email{ivdb@uevora.pt}
\thanks{The first author acknowledges the support of the Funda\c{c}\~{a}o para a
Ci\^{e}ncia e a Tecnologia, Portugal [grant SFRH/BPD/97436/2013]}
\date{}
\maketitle

\begin{abstract}
The \textit{quotient class }of a non-archimedean field is the set of cosets
with respect to all of its additive convex subgroups. The algebraic operations
on the quotient class are the Minkowski sum and product. We study the
algebraic laws of these operations. Addition and multiplication have a common
structure in terms of regular ordered semigroups. The two algebraic operations
are related by an adapted distributivity law.

\bigskip

\textbf{Keywords}: non-archimedean fields, cosets, regular semigroups, convexity.

\bigskip

\textbf{AMS classification}: 26E30, 12J15, 20M17, 06F05.

\end{abstract}

\section{Introduction}

We study the algebraic properties of the set of cosets with respect to all
possible convex additive subgroups of a non-archimedean field $F$, typically a
field of formal series or a Hardy-field. We will call this set of cosets the
\emph{quotient class }of $F$. Because the (Minkowski) sum of a nontrivial
convex additive subgroup and an arbitrary element can never be zero, the
quotient class cannot be a group for addition, and for similar reasons neither
for multiplication. Still a quotient class satisfies rather strong algebraic
properties, for, as we will see, addition and multiplication are commutative,
satisfy the properties of regular semigroups and are related by an adapted
distributive law.

The common structure of addition and multiplication is stronger than a regular
semigroup and was called \emph{assembly }in \cite{dinisberg}. We will call
\emph{magnitude }a convex additive subgroup $M$ of $F$, this is in line with a
common interpretation of Hardy-fields as models of orders of magnitude of
functions \cite{Hardy}\cite{Bourbaki}\cite{Aschenbrenner Van Den Dries}. There
exists a definite relationship between non-archimedean structures and
asymptotics \cite{Bosch Remmert}\cite{debruijn}\cite{Van der Corput}%
\cite{Lightstone Robinson}; in a sense, a magnitude may be seen as the size of
an imprecision. Given a coset with respect to $M$, the magnitude $M$ acts as
an individualized neutral element for addition. If $\alpha$ is a coset which
is not reduced to a magnitude $M$ it has an individualized neutral element for
multiplication $M/\alpha$, which with some abuse of language is called
\emph{unity}.

It is easy to see that distributivity does not hold in general. However we
show that distributivity holds up to a correction term which has the form of a
magnitude. We will identify other properties which relate addition and
multiplication and call the resulting structure \emph{association}.

The order relation in the ordered field $F$ induces a natural order in the
quotient class $Q$. We show that this is a total dense order relation
compatible with the operations. If $F$ is archimedean, the quotient class
reduces to an ordered field. If $F$ is non-archimedean, the quotient class
contains magnitudes different from $\{0\}$ and its domain. Clearly $\{0\}$ is
the minimal magnitude of $Q$, but $Q$ has also a maximal magnitude which is
its domain $F$ itself; the minimal unity is $\{1\}$. In general, an
association with these properties is called a \emph{solid}. So we will prove
that a quotient class of a non-archimedean field $F$ is a solid. For the sake
of clarity we give a full list of the axioms of a solid in the appendix.

As remarked above, in solids distributivity does not hold in general. However,
it turns out that in many cases full distributivity does hold, for example for
elements of the same sign. Also it is possible to give necessary and
sufficient conditions for the distributive law to hold for triples of elements
of solids. The proofs are rather involved and are presented in a second paper.

In Section \ref{Section Quotient Classes} we define the quotient class of an
ordered field. We extend the order relation to the quotient class, prove that
the property of trichotomy is maintained and show compatibility properties of
the order with the algebraic operations. We recall also some basic notions of
semigroups. In Section \ref{Section Assemblies} we recall the notion of
assembly which amounts to a regular semigroup with an idempotent condition on
the magnitude operator. As a consequence the magnitude operator will be
linear. We show that the quotient class is an assembly for addition and,
leaving out the magnitudes, for multiplication. In Section
\ref{Section Mixed properties} we define a structure called association which
is, roughly speaking, a ring with individualized neutral elements for both
addition and multiplication, and an adapted distributive law. Ordered
associations are associations equipped with a total order relation respecting
the algebraic operations. In Section \ref{Section Solids} we define solids
which are in a sense weakly distributive ordered fields with generalized
neutral elements given by magnitudes and unities. We show that the quotient
class of a non-archimedean field is a solid.

By the above, solids arise with non-archimedean fields. Archimedean solids may
exist, but only in a set theory with a different axiomatics than conventional
set theory. This question is briefly addressed at the end of the last section.

\section{Quotient classes\label{Section Quotient Classes}}

Let $\left(  F,+,\cdot,\leq,0,1\right)  $ be a non-archimedean ordered field.
Let $\mathcal{C}$ be the set of all convex subgroups for addition of $F$ and
$Q$ be the set of all cosets with respect to the elements of $\mathcal{C}$. We
will call the elements of $\mathcal{C}$ \emph{magnitudes} and $Q$ the
\emph{quotient class} of $F$ with respect to $\mathcal{C}$. Observe that
$\mathcal{C}$ is not reduced to $\left \{  0\right \}  $ and $F$ itself. Indeed,
a non-archimedean ordered field necessarily has infinitesimals other than $0$.
Let $\oslash$ denote the set of all infinitesimals in $F$. It is clearly
convex and satisfies the group property, so $\oslash \in \mathcal{C}$. An
element of $Q\backslash \mathcal{C}$ is called \emph{zeroless}.

For $\alpha \in Q$, in the remainder of this section we use the notation
$\alpha=a+A$, with $a\in F$ and $A\in \mathcal{C}$. Clearly $A$ is unique but
$a$ is not. If $\alpha$ is zeroless, one proves by induction that
$A/a\subseteq \lbrack-1/n,1/n]$ for all $n\in \mathbb{N}$, hence $A/a\subseteq
\oslash$.

We define (with abuse of notation) addition in $Q$ pointwise, i.e. by the
Minkowski sum%
\[
\alpha+\beta:=a+b+A+B.
\]
We define (with abuse of notation) multiplication in $Q$\ also pointwise, by
\[
\alpha \beta:=ab+aB+bA+AB\text{.}%
\]
Let also $\alpha=a^{\prime}+A$\ and $\beta=b^{\prime}+B$. Then $a^{\prime
}+b^{\prime}-(a+b)\in A+B$ and
\begin{align*}
a^{\prime}b^{\prime}-ab  &  =a(b^{\prime}-b)+b(a^{\prime}-a)+(a^{\prime
}-a)(b^{\prime}-b)\\
&  \in aB+bA+AB,
\end{align*}
hence addition and multiplication do not depend on the choice of representatives.

By convexity the sum of two magnitudes $A$ and $B$ is equal to one of them,
i.e.%
\begin{equation}
A+B=A\vee A+B=B, \label{sum is one}%
\end{equation}
according to whether $B\subseteq A$ or $A\subseteq B$.

We recall that a \emph{(Von Neumann) regular} semigroup is a semigroup in
which every element is regular, i.e. for all $a\in S$ there is $x\in S$ such
that $a+x+a=a$. In this way one may think of $x$ as a "weak inverse" of $a$. A
\emph{completely regular} semigroup is a semigroup in which every element is
in some subgroup of the semigroup. We will show that $(Q,+)$ and
$(Q\backslash \mathcal{C},\cdot)$ are completely regular semigroups. We will
see later that these structures are indeed somewhat stronger.

Next lemma says that two elements of $Q$ are either separated, i.e. their
intersection is empty, or one is contained in the other.

\begin{lemma}
\label{Tricotomia}Let $\alpha,\beta \in Q$. Then%
\[
\alpha \cap \beta=\emptyset \vee \alpha \subseteq \beta \vee \beta \subseteq
\alpha \text{.}%
\]

\end{lemma}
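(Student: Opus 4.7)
The plan is to reduce the trichotomy to the totally ordered nature of the family $\mathcal{C}$ of magnitudes, which is already available via equation (\ref{sum is one}). Writing $\alpha = a + A$ and $\beta = b + B$ with $A,B \in \mathcal{C}$, I suppose $\alpha \cap \beta \neq \emptyset$ and aim to prove that either $\alpha \subseteq \beta$ or $\beta \subseteq \alpha$.

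First, I would pick any $c \in \alpha \cap \beta$. Since a coset with respect to an additive subgroup is determined by any one of its elements, I may rewrite $\alpha = c + A$ and $\beta = c + B$. The problem then reduces to comparing the two magnitudes $A$ and $B$ themselves: if $A \subseteq B$ then $\alpha = c + A \subseteq c + B = \beta$, and symmetrically if $B \subseteq A$ then $\beta \subseteq \alpha$.

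Next I would invoke (\ref{sum is one}), which says that for any two magnitudes one has either $A + B = A$ or $A + B = B$. Since $0 \in A$ and $0 \in B$, the first case gives $B \subseteq A + B = A$ while the second gives $A \subseteq A + B = B$. Combined with the previous paragraph, this yields the desired inclusion in either case, completing the proof.

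The argument is essentially a two-step bookkeeping exercise, so I do not anticipate a real obstacle; the only subtle point is to realize that the trichotomy for cosets is driven entirely by the linear ordering of $\mathcal{C}$ by inclusion, which is the content of (\ref{sum is one}). Once one notices that a common point lets both cosets be re-based at the same representative, the remainder is immediate.
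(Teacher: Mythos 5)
Your proof is correct and follows essentially the same route as the paper: re-base both cosets at a common point of the intersection and then use the fact that the magnitudes are linearly ordered by inclusion. The only difference is cosmetic — you explicitly justify the dichotomy $A\subseteq B$ or $B\subseteq A$ via (\ref{sum is one}), whereas the paper takes it as understood from convexity.
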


\begin{proof}
Suppose that $\alpha \cap \beta \neq \emptyset$. Then there exists $x$ in $F$ such
that $x\in \alpha$ and $x\in \beta$. We may write $\alpha=x+A$ and $\beta=x+B$.
If $A\subseteq B$, one has $\alpha \subseteq \beta$ and if $B\subseteq A$, one
has $\beta \subseteq \alpha$.
\end{proof}

We now use the order relation $\leq$ on $F$ to define an order relation, also
noted $\leq$, on $Q$; we will see below that it is a total order relation
respecting the operations, extending the order relation on $F$.

\begin{definition}
\label{order external numbers}Given $\alpha,\beta \in Q$, we say that $\alpha$
is \emph{less than or equal to} $\beta$ and we write (with abuse of notation)
$\alpha \leq \beta$, if and only if
\begin{equation}
(\forall x\in \alpha)(\exists y\in \beta)(x\leq y). \label{def}%
\end{equation}
We say that $\alpha$ is \emph{(strictly) less than} $\beta$ and write
$\alpha<\beta$, if $\alpha \leq \beta$ and $\alpha \cap \beta=\emptyset$.
\end{definition}

Note that if $\alpha$ and $\beta$ are separated, formula (\ref{def}) is
equivalent to $(\forall x\in \alpha)(\forall y\in \beta)(x<y)$.

If the magnitudes of $\alpha$ and $\beta$ are $\{0\}$, these elements may be
identified with elements of $F$, and the order relation (\ref{def})
corresponds to the original order relation $\leq$ on $F$. Let $A$ and $B$ be
magnitudes. By convexity, $A\leq B$ if and only if $A\subseteq B$, i.e.
magnitudes are ordered by inclusion. Clearly $0\in A$ for every magnitude $A$
and as a consequence all magnitudes are positive. Since $A\subseteq B$ if and
only if $A+B=B$, the order relation on the magnitudes corresponds to the
natural partial order restricted to idempotents, see for example \cite[p.
14]{Howie}\cite[p. 18]{Petrich}.

\begin{theorem}
\label{total order rel}The relation $\leq$ is a total order relation. It is
compatible with addition and multiplication in the following way:

\begin{enumerate}
\item $\forall \alpha \forall \beta \forall \gamma \left(  \alpha \leq \beta
\Rightarrow \alpha+\gamma \leq \beta+\gamma \right)  $.

\item $\forall \alpha(A<\alpha \Rightarrow \forall \beta \forall \gamma \left(
\beta \leq \gamma \Rightarrow \alpha \beta \leq \alpha \gamma \right)  $.

\item $\forall A\forall \beta \forall \gamma \left(  B\leq \beta \leq \gamma
\Rightarrow A\beta \leq A\gamma \right)  $.
\end{enumerate}
\end{theorem}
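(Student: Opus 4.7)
The theorem splits into verifying the order-theoretic properties of $\le$ and the three compatibility clauses.

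\emph{Total order.} Reflexivity and transitivity are immediate from the quantifier condition (\ref{def}). For totality, given $\alpha,\beta\in Q$, Lemma~\ref{Tricotomia} gives either $\alpha\subseteq\beta$ (whence $\alpha\le\beta$ by taking $y:=x$), or $\beta\subseteq\alpha$ (similarly), or $\alpha\cap\beta=\emptyset$; in the last case, writing $\alpha=a+A$ and $\beta=b+B$ forces $a-b\notin A+B$, and by convexity of the subgroup $A+B$ around $0$ the element $a-b$ lies either above all of $A+B$ or below all of it, yielding after a sign computation one of $\alpha\le\beta$ or $\beta\le\alpha$. For antisymmetry, Lemma~\ref{Tricotomia} is applied again: in the disjoint case the remark after Definition~\ref{order external numbers} gives contradictory strict inequalities; in the nested case $\alpha\subsetneq\beta$, a common representative together with convexity of $A$ shows that picking $b_0\in B\setminus A$ with $b_0>0$ produces an element of $\beta$ with no larger partner in $\alpha$, blocking $\beta\le\alpha$.

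\emph{Compatibility clauses.} Clause (1) is a direct quantifier chase: for $w=x+g\in\alpha+\gamma$ pick $y\in\beta$ with $x\le y$ from $\alpha\le\beta$ and observe $w\le y+g\in\beta+\gamma$. For clause (2) the assumption $A<\alpha$ forces every $x\in\alpha$ to be strictly positive, and I split $\beta\le\gamma$ via Lemma~\ref{Tricotomia}. If $\beta\subseteq\gamma$, a common representative gives $B\subseteq C$ and the termwise inclusion $\alpha\beta=ab+aB+bA+AB\subseteq ab+aC+bA+AC=\alpha\gamma$, whence $\alpha\beta\le\alpha\gamma$. If $\beta,\gamma$ are separated with $\beta<\gamma$, I compare the magnitudes $M_\beta=aB+bA+AB$ and $M_\gamma=aC+cA+AC$ (convex subgroups of an ordered field form a chain under inclusion), and for each $w\in\alpha\beta$ construct $w'\in\alpha\gamma$ realising $w\le w'$ by taking the magnitude-correction of $w'$ so as to dominate the error part of $w$, exploiting $a(c-b)>0$ and the fact that $c-b$ exceeds all of $B+C$. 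Clause (3) is cleaner: since $A$ is a magnitude, $A\beta=bA+AB$ and $A\gamma=cA+AC$ are themselves magnitudes and the inequality reduces to $A\beta\subseteq A\gamma$. The hypothesis $B\le\beta$ furnishes a non-negative representative $b$; the case analysis $\beta\subseteq\gamma$ versus separated then yields $bA\subseteq cA$ and $AB\subseteq cA$ by the convexity estimates $|ba'/c|\le|a'|$ and $|a'b'/c|\le|a'|$ for $a'\in A$, $b'\in B$ (using $|b|\le c$ and $|b'|<c$).

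\emph{Main obstacle.} The delicate point is clause (2) in the separated subcase: the naive choice $w'=xz+r$ (with $z\in\gamma$ and $r$ the error part of $w$) need not lie in $\alpha\gamma$ because $r$ may escape $M_\gamma$. The resolution is to use that $M_\beta$ and $M_\gamma$ are comparable and to split on whether $a(c-b)$ lies in the larger of the two (in which case $\alpha\beta\subseteq\alpha\gamma$ and $w'=w$ suffices) or dominates it (in which case $\alpha\beta<\alpha\gamma$ is separated and any $w'\in\alpha\gamma$ works). This is also the step that forces the hypothesis $A<\alpha$ rather than merely $A\le\alpha$, since positivity of the representatives $x\in\alpha$ is what aligns the two multiplicative magnitudes.
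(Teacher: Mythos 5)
Your proof is correct in substance, but for the multiplicative clause (2) it takes a genuinely different and much heavier route than the paper. The paper's proof of all three compatibility clauses is a uniform quantifier chase that exploits the identification of $\alpha\beta=ab+aB+bA+AB$ with the Minkowski product: every $w\in\alpha\beta$ is written as an actual product $w=xy$ with $x\in\alpha$, $y\in\beta$, the hypothesis $A<\alpha$ forces $x>0$, and the witness $w'=xz\in\alpha\gamma$ (where $z\in\gamma$ satisfies $y\leq z$) gives $w\leq w'$ at once. Your ``main obstacle'' is an artifact of instead decomposing $w$ as $ab+r$ with $r$ in the product magnitude; under the product representation the error term never has to be tracked, and no comparison of $M_\beta$ with $M_\gamma$ is needed. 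Your case analysis does go through, but the branch in which $a(c-b)$ lies in $\max(M_\beta,M_\gamma)$ is under-justified as stated: the asserted inclusion $\alpha\beta\subseteq\alpha\gamma$ would fail if $M_\gamma\subsetneq M_\beta$ could occur there, so you still owe the observation that $a(c-b)>aB+aC$ (from separation and $a>0$) forces $a(c-b)$ into $bA$, respectively $cA$, hence $(c-b)/b$ infinitesimal, $bA=cA$, and finally $M_\beta=M_\gamma$, so that $\alpha\beta$ and $\alpha\gamma$ actually coincide in that branch. For the order-theoretic part, your detour through Lemma~\ref{Tricotomia} for totality is fine but unnecessary (the paper simply negates $\alpha\leq\beta$ to produce $x\in\alpha$ exceeding every $y\in\beta$, which is exactly $\beta\leq\alpha$), and your antisymmetry argument, like the paper's, comes down to convexity; clauses (1) and (3) match the paper's in substance. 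If you make explicit the (easy, chain-of-convex-subgroups) fact that $ab+aB+bA+AB$ equals the set of products $\{xy\mid x\in\alpha,\ y\in\beta\}$ --- a fact the paper itself uses silently --- your entire clause (2) analysis collapses to the paper's two-line argument.
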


\begin{proof}
Let $\alpha,\beta$ and $\gamma$ be arbitrary elements of $Q$. We prove firstly
that the relation $\leq$ is a total order relation on $Q$. Let $x\in \alpha$.
Because $x\leq x$ one has $\alpha \leq \alpha$, so the relation is reflexive.
Suppose that $\alpha \leq \beta$ and $\beta \leq \gamma$. Then for all $x\in
\alpha$ there exist $y\in \beta$ and $z\in \gamma$ such that $x\leq y$ and
$y\leq z$. Hence $x\leq z$ and the relation is transitive. Suppose now that
$\alpha \leq \beta$ and $\beta \leq \alpha$. Let $x\in \alpha$. Because $\alpha
\leq \beta$ there exists $y\in \beta$ such that $x\leq y$. There exists
$y^{\prime}\in \beta$ such that $y^{\prime}\leq x$, if not, all $x\in \alpha$
and $y^{\prime}\in \beta$ satisfy $x<y^{\prime}$, in contradiction with the
fact that $\beta \leq \alpha$. By convexity $x\in \lbrack y^{\prime}%
,y]\subseteq \beta$, hence $\alpha \subseteq \beta$. In an analogous way one
shows that $\beta \subseteq \alpha$. Hence $\alpha=\beta$ and the relation is
antisymmetric. To prove the totality property suppose that $\alpha \nleq \beta$.
Then there is $x\in \alpha$ such that $y<x$ for all $y\in \beta$. Hence
$\beta \leq \alpha$. We conclude that the relation $\leq$ is a total order relation.

We finish by proving the three compatibility properties.

\begin{enumerate}
\item Suppose that $\alpha \leq \beta$. Let $w\in \alpha+\gamma$. Then there are
$x\in \alpha$ and $z\in \gamma$ such that $w=x+z$. Now there exists $y\in \beta$
such that $x\leq y$. Hence $x+z\leq y+z\in \beta+\gamma$ and one concludes that
$\alpha+\gamma \leq \beta+\gamma$.

\item Suppose that $A<\alpha$ and $\beta \leq \gamma$. Let $w\in \alpha \beta$.
Then there exist $x\in \alpha$, $0<\alpha$ and $y\in \beta$ such that $w=xy$.
Because $\beta \leq \gamma$ there is $z\in \gamma$ such that $y\leq z$. Then
$xy\leq yz$, hence $\alpha \gamma \leq \beta \gamma$.

\item Suppose that $B\leq \beta \leq \gamma$. Let $w\in A\beta$. Because
$B\leq \beta$ the element $w$ may be supposed positive. Then there exist $x\in
A$, $0\leq x$ and $0\leq y\in \beta$ such that $w=xy$. Because $\beta \leq
\gamma$ there is $z\in \gamma$ such that $y\leq z$. Then $xy\leq yz$, hence
$A\gamma \leq A\gamma$.
\end{enumerate}
\end{proof}

The above proposition states that usual compatibility holds for multiplication
by strictly positive (zeroless) elements $\alpha$.

If $\alpha=A$ is a magnitude, the rule must be restricted to nonnegative
$\beta$ and $\gamma$, for instance, if $\omega>0$ is infinitely large, one has
$-\omega<-1$, while $\oslash \cdot \left(  -1\right)  \leq \oslash \cdot \left(
-\omega \right)  $, for $\oslash \cdot \left(  -1\right)  =\oslash$ and
$\oslash \cdot \left(  -\omega \right)  =\oslash \cdot \omega \geq1/\omega
\cdot \omega=1$.

\section{The magnitude operator. Assemblies.\label{Section Assemblies}}

Let $\alpha=a+A\in Q$. Then the magnitude $A$ is a sort of individualized
neutral element, since $\alpha+A=a+A+A=a+A=\alpha$. As regards to other
magnitudes $B$ which leave $\alpha$ invariant it distinguishes itself by the
property $A+B=A$ and being uniquely determined by $\alpha$. Hence we
may\ define a function $e:Q\rightarrow \mathcal{C}$ by putting $e\left(
\alpha \right)  =A$. The function is linear, for if $\beta=b+B\in Q$%
\begin{equation}
e\left(  \alpha+\beta \right)  =A+B=e\left(  \alpha \right)  +e\left(
\beta \right)  . \label{magnitude linear}%
\end{equation}
Also, by (\ref{sum is one})%
\begin{equation}
e\left(  \alpha+\beta \right)  =e\left(  \alpha \right)  \vee e(\alpha
+\beta)=e\left(  \beta \right)  . \label{magnitude linear 2}%
\end{equation}
With respect to $e\left(  \alpha \right)  $ we may also identify a
distinguished symmetrical element $s(\alpha)$, having the same magnitude as
$\alpha$, simply by putting $s(\alpha)=-\alpha=-a+A$. Semigroup structures
with the above properties for individualized neutral and symmetrical elements
have been called \emph{assemblies} in \cite{dinisberg}.

Below we list the axioms of an assembly. It is easy to verify that the element
$e$ of Definition \ref{AssemblyA}.\ref{assemblyneut copy(1)} is unique (see
Remark \ref{e is function}) and with some abuse of language we use the same
notation as above.

\begin{definition}
\label{AssemblyA}A non-empty structure $(\mathcal{A},+)$ is called an
\emph{assembly} if

\begin{enumerate}
\item \label{assemblyassoc copy(1)}$\forall x\forall y\forall z(x+\left(
y+z\right)  =\left(  x+y\right)  +z).$

\item \label{assemblycom copy(1)}$\forall x\forall y(x+y=y+x).$

\item \label{assemblyneut copy(1)}$\forall x\exists e\left(  x+e=x\wedge
\forall f\left(  x+f=x\rightarrow e+f=e\right)  \right)  .$

\item \label{assemblysim copy(1)}$\forall x\exists s\left(  x+s=e\left(
x\right)  \wedge e\left(  s\right)  =e\left(  x\right)  \right)  .$

\item \label{assemblye(xy) copy(1)}$\forall x\forall y\left(  e\left(
x+y\right)  =e\left(  x\right)  \vee e\left(  x+y\right)  =e\left(  y\right)
\right)  .$
\end{enumerate}
\end{definition}

We will prove that $(Q,+)$ and $(Q\backslash \mathcal{C},\cdot)$ are
assemblies. First we prove that they are completely regular commutative
semigroups. Let $A$ be a magnitude. We let $F_{A}=\left \{  x+A|x\in F\right \}
$ and $R_{A}=\left \{  x\left(  1+A/a\right)  |x\in F\backslash \left \{
0\right \}  \right \}  $.

\begin{proposition}
$(Q,+)$ is a completely regular commutative semigroup.
\end{proposition}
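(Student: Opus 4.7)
The plan is to verify the three constituent properties of a completely regular commutative semigroup in turn, each of which follows with little effort from the definitions already set up.

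First, associativity and commutativity of $(Q,+)$ are immediate. Since addition on $Q$ is defined pointwise as the Minkowski sum, and since $(F,+)$ is itself a commutative group, one has
\[
\alpha+(\beta+\gamma)=a+b+c+A+B+C=(\alpha+\beta)+\gamma
\]
and $\alpha+\beta=\beta+\alpha$ by inspection. The fact that representatives play no role was already justified just after the definition of addition. So $(Q,+)$ is a commutative semigroup.

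For complete regularity, the key observation is that every element $\alpha=a+A\in Q$ sits inside a subgroup of $(Q,+)$, namely the set $F_{A}=\{x+A\mid x\in F\}$ of all cosets with magnitude $A$. The plan is to check in order: (i) $F_{A}\subseteq Q$, which is immediate since $A\in\mathcal{C}$; (ii) $F_{A}$ is closed under the Minkowski sum, since for $x+A,y+A\in F_{A}$, using $A+A=A$ from (\ref{sum is one}), one has $(x+A)+(y+A)=x+y+A\in F_{A}$; (iii) $A=0+A$ is an identity for $F_{A}$, since $(x+A)+A=x+A$; and (iv) every element $x+A$ has inverse $-x+A$ in $F_{A}$, since $(x+A)+(-x+A)=A$. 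Hence $(F_{A},+)$ is a group, it is a subsemigroup of $(Q,+)$, and it contains $\alpha$.

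There is no real obstacle here; the only point that requires a small nod is that the algebraic operation on $F_{A}$ really is the restriction of the Minkowski sum on $Q$, which is immediate from the definition. Putting the three items together yields that $(Q,+)$ is a completely regular commutative semigroup.
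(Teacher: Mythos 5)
Your proof is correct and takes essentially the same route as the paper: both establish complete regularity by exhibiting $F_{A}$ as a subgroup of $(Q,+)$ containing $\alpha$, with $A$ as identity and $-x+A$ as inverse. You merely spell out the closure, identity and inverse checks for $F_{A}$ more explicitly than the paper does.
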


\begin{proof}
Clearly addition is associative and commutative. Let $\alpha=a+A\in Q$. Then
$\alpha \in F_{A}$. Observe that $e\left(  \alpha \right)  $ is the neutral
element of $F_{A}$ and $s(\alpha)$ is the symmetric element of $\alpha$ in
$F_{A}$, hence $F_{A}$ is a subgroup of $Q$. Hence the commutative semigroup
$(Q,+)$ is completely regular.
\end{proof}

\begin{proposition}
\label{QC semigroup}$(Q\backslash \mathcal{C},\cdot)$ is a completely regular
commutative semigroup.
\end{proposition}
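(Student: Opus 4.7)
The plan is to mirror the argument for $(Q,+)$, with the extra step of verifying that multiplication does not leak out of $Q\setminus\mathcal{C}$. Associativity and commutativity of $\cdot$ follow routinely from the Minkowski definition together with the field axioms in $F$ and the absorption rule (\ref{sum is one}) for sums of magnitudes.

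For closure under multiplication, given zeroless $\alpha = a+A$ and $\beta = b+B$ (so $a,b \neq 0$ and $A/a, B/b \subseteq \oslash$ by the observation following the definition of $Q$), I would write
\[
\alpha\beta = ab + aB + bA + AB = ab\bigl(1 + A/a + B/b + (A/a)(B/b)\bigr).
\]
Using the elementary fact that $\epsilon C \subseteq C$ whenever $C$ is a magnitude and $\epsilon \in \oslash$ (by convexity, since $|\epsilon s| \leq |s|$ for $s \in C$), each of $A/a$, $B/b$, $(A/a)(B/b)$ is an infinitesimal magnitude; by (\ref{sum is one}) so is their sum. Hence $\alpha\beta$ is the coset of $ab \neq 0$ by an infinitesimal magnitude, therefore zeroless.

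For complete regularity, given zeroless $\alpha = a+A$, I set $U := 1 + A/a$, the prospective \emph{unity} of $\alpha$, and take as candidate subgroup
\[
R_\alpha := \{xU : x \in F \setminus \{0\}\}.
\]
The verification rests on the single identity
\[
U\cdot U = 1 + A/a + A/a + (A/a)(A/a) = 1 + A/a = U,
\]
which holds because $A/a + A/a = A/a$ (additive subgroup) and $(A/a)(A/a) \subseteq A/a$ by the absorption fact above. From this, $U$ is idempotent and plays the role of identity in $R_\alpha$; the product $(xU)(yU) = xyU$ lies in $R_\alpha$ (and in $Q\setminus\mathcal{C}$ by the closure step), each $xU$ has inverse $(1/x)U$, and finally $\alpha = a(1 + A/a) = aU \in R_\alpha$.

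The main technical content is really one fact used three times: $A/a \subseteq \oslash$ combined with the absorption $A\cdot \oslash \subseteq A$. This is precisely what makes $U$ idempotent and what would fail for $\alpha \in \mathcal{C}$ (where no representative $a$ with $A/a \subseteq \oslash$ exists), explaining the restriction to $Q\setminus\mathcal{C}$. A secondary subtlety is well-definedness: if $a' = a + t$ with $t\in A$, then $a/a' = 1/(1 + t/a)$ is of the form $1 + \delta$ with $\delta \in \oslash$, and the same absorption gives $A/a' = (1+\delta)(A/a) = A/a$, so $U$ and $R_\alpha$ do not depend on the representative chosen.
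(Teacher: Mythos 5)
Your proof is correct and follows essentially the same route as the paper: both factor $\alpha = a(1+A/a)$ and show $\alpha$ lies in the multiplicative subgroup generated by the idempotent unity $1+A/a$, with the key identity $(1+A/a)(1+A/a)=1+A/a$ resting on the absorption $A/a\cdot A/a\subseteq A/a$. Your additional checks (closure of $Q\setminus\mathcal{C}$ under multiplication and independence of the representative $a$) are details the paper leaves implicit, but they do not change the argument.
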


\begin{proof}
Clearly multiplication is associative and commutative. Let $\alpha=a+A\in
Q\backslash \mathcal{C}$. We may write $\alpha=a\left(  1+A/a\right)  $. Recall
that $1+A/a$ is zeroless. Then $\alpha \in R_{A}$, which is a subgroup of $Q$;
this follows from the fact that $\left(  1+A/a\right)  \left(  1+A/a\right)
=1+A/a+A/a+A/a\cdot A/a=\left(  1+A/a\right)  $, noting that $A/a\cdot
A/a\subseteq A/a\cdot1=A/a$. Hence the commutative semigroup $(Q\backslash
\mathcal{C},\cdot)$ is completely regular.
\end{proof}

The above structures are not proper groups. Indeed, if $X\in \mathcal{C},X>0$,
the equation $1+\oslash+(-1+X)=0$ does not have a solution, for $\oslash+X\geq
X>0$. Also $(1+X)/(1+\oslash)\neq1$ for any convex group $X$, for
$(1+X)/(1+\oslash)=(1+X)(1+\oslash)=1+X+\oslash+X\oslash>1$.

An assembly is a completely regular commutative semigroup, for every
$a\in \mathcal{A}$ is element of the group $\mathcal{A}_{e(a)}=$ $\left \{
x\in \mathcal{A}|e(x)=e(a)\right \}  $. Conversely, a completely regular
commutative semigroup is an assembly with $s\left(  \alpha \right)  =-a+A$ if
the operator $e$ satisfies condition (\ref{assemblye(xy) copy(1)}) of
Definition \ref{AssemblyA}. By the remarks above and formula
(\ref{magnitude linear 2}), we have the following proposition.

\begin{proposition}
\label{Q assembleia}The structure $(Q,+)$ is an assembly.
\end{proposition}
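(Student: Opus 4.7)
The plan is to verify the five axioms of Definition~\ref{AssemblyA} by collecting pieces already at hand rather than by fresh arguments. Axioms~(1) and (2), associativity and commutativity of the Minkowski sum, were just established in the preceding proposition exhibiting $(Q,+)$ as a completely regular commutative semigroup. Axiom~(5) is literally formula~(\ref{magnitude linear 2}). So only axioms~(3) and (4), which concern the individualized neutral and symmetric elements, require any genuine work, and both turn out to be routine.

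For axiom~(3), given $\alpha = a + A$, my plan is to propose $e := A$ as the witness. The equation $\alpha + A = \alpha$ follows immediately because $A$ is a subgroup of $F$. For the second clause, I would take an arbitrary $f = b + B \in Q$ satisfying $\alpha + f = \alpha$ and compare magnitudes and representatives on both sides of $(a+b) + (A+B) = a + A$, which forces $A + B = A$ and $b \in A$; then $e + f = A + b + B = A + B = A = e$, as required. As a by-product this shows the witness is unique and coincides with the magnitude operator $e$ introduced above the statement.

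For axiom~(4) I would set $s(\alpha) := -\alpha = -a + A$ and verify directly that $\alpha + s(\alpha) = (a - a) + (A + A) = A = e(\alpha)$ and $e(s(\alpha)) = A = e(\alpha)$, so both conjuncts of the axiom hold.

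There is no substantive obstacle: the entire content of the proposition sits in the observations already made at the start of Section~\ref{Section Quotient Classes} about independence from the choice of representative $a$, together with the convexity identity~(\ref{sum is one}) and its consequence~(\ref{magnitude linear 2}). The write-up is thus essentially a matter of organizing these references so that each axiom of Definition~\ref{AssemblyA} is ticked off in turn.
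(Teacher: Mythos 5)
Your proposal is correct and follows essentially the same route as the paper: the paper also reduces the claim to the completely regular commutative semigroup structure already established, invokes formula~(\ref{magnitude linear 2}) for axiom~(5), and takes $A$ and $-a+A$ as the witnesses for axioms~(3) and~(4). You merely spell out the verification of axioms~(3) and~(4) more explicitly than the paper, which leaves them to the remarks at the start of Section~\ref{Section Assemblies}.
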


The proposition is also true for the multiplicative structure $(Q\backslash
\mathcal{C},\cdot)$. Let $u:Q\backslash \mathcal{C\rightarrow}Q\backslash
\mathcal{C}$ be defined by $u(\alpha)=1+A/\alpha$ and let $d:Q\backslash
\mathcal{C\rightarrow}Q\backslash \mathcal{C}$ be defined by $d\left(
\alpha \right)  =\frac{1}{\alpha}$. Note that for all $a$ such that
$\alpha=a+A$
\begin{equation}
\frac{1}{\alpha}=\frac{1}{a+A}=\frac{1}{a(1+A/a)}=\frac{1}{a}\left(
1+\frac{A}{a}\right)  =\frac{1}{a}+\frac{A}{a^{2}}\in Q\backslash \mathcal{C}.
\label{formule1alfa}%
\end{equation}
Then also for all $a$ such that $\alpha=a+A$
\begin{equation}
\frac{A}{\alpha}=\frac{A}{a}+\frac{A^{2}}{a^{2}}=\frac{A}{a}
\label{formuleGalfa}%
\end{equation}
and $u(\alpha)=1+A/a\in Q\backslash \mathcal{C}$. This means that the functions
$u$ and $d$ are well-defined. Clearly $u\left(  \alpha \right)  $ is the
multiplicative neutral element (unity) of $\alpha$ and $d\left(
\alpha \right)  $ is the inverse of $\alpha$ in the group $R_{A}$. In
particular $\alpha u\left(  \alpha \right)  =\alpha$ and $\alpha d\left(
\alpha \right)  =u\left(  \alpha \right)  $. Note that
\begin{equation}
e\left(  u\left(  \alpha \right)  \right)  =e\left(  \alpha \right)
/\alpha \label{e(u(alpha))}%
\end{equation}
and
\begin{equation}
e\left(  d\left(  \alpha \right)  \right)  =e\left(  \alpha \right)  /\alpha
^{2}. \label{e(d(alpha))}%
\end{equation}

\begin{proposition}
\label{Q* assembleia}The structure $(Q\backslash \mathcal{C},\cdot)$ is an assembly.
\end{proposition}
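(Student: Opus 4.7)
The plan is to follow the template of Proposition \ref{Q assembleia}. By Proposition \ref{QC semigroup} the structure $(Q\backslash\mathcal{C},\cdot)$ is already a completely regular commutative semigroup, with the candidate unity $u(\alpha)$ and the candidate inverse $d(\alpha)$ introduced above. As observed just before Proposition \ref{Q assembleia}, a completely regular commutative semigroup becomes an assembly as soon as its individualized neutral-element operator satisfies the analog of axiom~\ref{assemblye(xy) copy(1)} of Definition \ref{AssemblyA}. The only nontrivial point is therefore to establish the multiplicative counterpart of (\ref{magnitude linear 2}): for all $\alpha,\beta \in Q\backslash\mathcal{C}$,
\[
u(\alpha\beta) = u(\alpha) \ \vee \ u(\alpha\beta) = u(\beta).
\]

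To obtain this, I would write $\alpha = a+A$ and $\beta = b+B$, so that $\alpha\beta = ab + (aB + bA + AB)$ with $ab \neq 0$ a representative of the zeroless element $\alpha\beta$. Applying formula (\ref{formuleGalfa}) to $\alpha\beta$ with this representative yields
\[
u(\alpha\beta) \;=\; 1 + \frac{aB + bA + AB}{ab} \;=\; 1 + \frac{A}{a} + \frac{B}{b} + \frac{AB}{ab}.
\]
The goal is then to absorb the cross term $AB/(ab)$ into $A/a + B/b$, reducing the expression to $1 + A/a + B/b$, and finally to invoke (\ref{sum is one}) on the magnitudes $A/a$ and $B/b$.

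The only technical step, which I expect to be the main (and essentially only) obstacle, is the absorption $AB/(ab) \subseteq A/a + B/b$, and the argument is the one already used in Proposition \ref{QC semigroup}. Since $\alpha,\beta$ are zeroless, the remark following the definition of $Q$ gives $A/a,\,B/b \subseteq \oslash$. Hence for any $x \in A/a$ and $y \in B/b$ one has $|x| \leq 1$, so $|xy| \leq |y|$, and by convexity of $B/b$ together with $0 \in B/b$ we conclude $xy \in B/b$; thus $AB/(ab) = (A/a)(B/b) \subseteq B/b$, and symmetrically $AB/(ab) \subseteq A/a$. In particular $AB/(ab) \subseteq A/a + B/b$, so that $u(\alpha\beta) = 1 + A/a + B/b$. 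Finally (\ref{sum is one}), applied to the magnitudes $A/a$ and $B/b$, identifies their sum with either $A/a$ or $B/b$, hence $u(\alpha\beta)$ with either $u(\alpha)$ or $u(\beta)$, as required.
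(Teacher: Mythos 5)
Your proposal is correct and follows essentially the same route as the paper: reduce to verifying axiom (\ref{assemblye(xy) copy(1)}) for the unity operator, compute $u(\alpha\beta)=1+A/a+B/b$ after absorbing the cross term $AB$, and conclude via (\ref{sum is one}). The paper compresses the absorption into the single line $aB+bA+AB=aB+bA$; you merely spell out that justification (already sketched in Proposition \ref{QC semigroup}) in more detail.
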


\begin{proof}
By Proposition \ref{QC semigroup} we need only to verify condition
(\ref{assemblye(xy) copy(1)}) of Definition \ref{AssemblyA}. Let $\alpha
,\beta \in Q\backslash \mathcal{C}$. Then $aB+bA+AB=aB+bA$. Hence
\[
u\left(  \alpha \beta \right)  =u\left(  ab+aB+bA+AB\right)  =1+\frac{aB+bA}%
{ab}=1+\frac{B}{b}+\frac{A}{a},
\]
which is equal to $u(\alpha)$ or $u(\beta)$.
\end{proof}

Classical models of orders of magnitude are based on the $O^{\prime}s$ and
$o^{\prime}s$. They can be seen as sets of real functions, for which addition
can be defined pointwise \cite{debruijn}. We give an example where $O^{\prime
}s$ and $o^{\prime}s$ give rise to additive and multiplicative assemblies.
This is done in the context of a non-archimedean field, in which all the
magnitudes except $\left \{  0\right \}  $ and the field itself may be
determined in terms of $O^{\prime}s$ and $o^{\prime}s$; in fact, the
$o^{\prime}s$ are reduced to $O^{\prime}s$. Let $\mathcal{R}$ be the set of
all rational fractions with coefficients in $\mathbb{R}$ with the usual
addition and multiplication. Let $n\in \mathbb{Z}$. Clearly $O\left(
x^{n}\right)  $ is a magnitude, for $x\rightarrow \infty$ and then $o\left(
x^{n}\right)  =O\left(  x^{n-1}\right)  $. Conversely, let $\left \{
0\right \}  \subset M\subset \mathcal{R}$ be a magnitude. Let $n\in \mathbb{N}$
be minimal such that $x^{n}\notin M$. If there exists $r\in \mathcal{R}$ such
that $\limsup r\left(  x\right)  /x^{n-1}=\lim r\left(  x\right)
/x^{n-1}=\infty$ then the degree of $r$ is equal to $n$. Hence $x^{n}\in M$, a
contradiction. Hence $M=O\left(  x^{n-1}\right)  $. Let $Q$ be the quotient
field of $\mathcal{R}$. So within $Q$ the $O^{\prime}s$ define additive and
multiplicative assemblies.

The following example shows that $O^{\prime}s$ and $o^{\prime}s$ do not
generate assemblies in general.

\begin{example}
\label{example not assembly}\emph{We will show that condition
(\ref{assemblye(xy) copy(1)}) of Definition \ref{AssemblyA} does not hold for
}$O^{\prime}s$\emph{ and }$o^{\prime}s$\emph{ of real functions. Let
}$f,g:R\rightarrow R^{+}$\emph{ be defined by }$f(x)=x$\emph{ }$+x^{2}(\sin
x,0)^{+}$\emph{ and }$g(x)=x$\emph{ }$+x^{2}(\cos x,0)^{+}$\emph{. For
}$x\rightarrow+\infty$\emph{ we have }$O(f+g)=O(x^{2})$\emph{, but since
}$x^{2}\notin O(f)$\emph{ and }$x^{2}\notin O(g)$\emph{, neither
}$O(f)=O(f+g)$\emph{, nor }$O(g)=O(f+g)$\emph{. For the same reason neither
}$o(f)=o(f+g)$\emph{, nor }$o(g)=o(f+g)$\emph{.}
\end{example}

We end with examples of assemblies in a different context.

\begin{example}
\label{exampleassembly}

\begin{enumerate}
\item Commutative groups are assemblies on which the function $e$ is constant.

\item Let $C$ be a chain for inclusion with the union operation $\cup$. The
structure $(C,\cup)$ is an assembly, with $e(U)=s(U)=U$ for all $U\subseteq
C$. Note that $e(U\cup V)=U\cup V$. Hence $e(U\cup V)=e(U)$ or $e(U\cup
V)=e(V)$.
\end{enumerate}
\end{example}

\section{Mixed properties of addition and
multiplication\label{Section Mixed properties}}

We will see by a simple example that distributivity does not hold in $Q$.
Still an adapted version of distributivity does hold, which requires the
introduction of a correcting term in the form of a magnitude. Then we
calculate the magnitude and the symmetrical of the product. We introduce the
notion of association which roughly speaking stays in relation to rings in the
way assemblies are to groups. Associations with a total order relation
compatible with the operations are called ordered associations. Finally we
show that $Q$ is indeed an ordered association.

We start by showing that distributivity does not hold in $Q$.

\begin{example}
$0=\oslash(1-1)\neq \oslash1-\oslash1=\oslash$.
\end{example}

In the example the error made has the form of a magnitude. This is generally
true and follows from the next two propositions. The first proposition gives
the form of the error term and the second one shows that this error is a magnitude.

\begin{proposition}
\label{formula dist total cosets}Let $\alpha=a+A,\beta=b+B,\gamma=c+C\in Q$.
Then $\alpha \beta+\alpha \gamma=\alpha \left(  \beta+\gamma \right)
+A\beta+A\gamma$.
\end{proposition}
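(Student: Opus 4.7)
The plan is to compute both sides by expanding them according to the pointwise definitions of addition and multiplication of cosets, and then to match the two resulting Minkowski expressions by absorbing redundant magnitude terms via (\ref{sum is one}) and the subgroup identity $M+M=M$.

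First, unfolding the left-hand side directly from the definition of the product gives
\[
\alpha\beta + \alpha\gamma = ab + ac + aB + aC + bA + cA + AB + AC.
\]
Next, writing $\beta + \gamma = (b+c)+(B+C)$ and treating $A$ as the coset $0+A$ (so that $A\beta = bA + AB$ and $A\gamma = cA + AC$), the right-hand side expands to
\[
\alpha(\beta+\gamma) + A\beta + A\gamma = ab + ac + a(B+C) + (b+c)A + A(B+C) + bA + cA + AB + AC.
\]

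The two expressions will then be reconciled by three elementary set-theoretic observations: first, $a(B+C) = aB + aC$, by the element-wise distributive law $a(y+z)=ay+az$; second, $(b+c)A \subseteq bA + cA$, since $(b+c)x = bx + cx$ for $x \in A$, so this term is absorbed by $bA + cA$; and third, $A(B+C) \subseteq AB + AC$, since any $x(y+z)$ with $x\in A$, $y\in B$, $z\in C$ splits as $xy + xz$, so this term is absorbed by $AB + AC$. Each absorption reduces to (\ref{sum is one}) once one knows that the absorbing set is itself a magnitude.

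The main (mild) technical point is precisely this last caveat: one must verify that Minkowski products such as $AB$ are again convex additive subgroups of $F$. Convexity follows by writing $v = x_0(v/x_0)$ for a suitable nonzero $x_0 \in A$ and using convexity of $B$; closure under addition uses that $2x \in A$ whenever $x \in A$, which lets one sandwich a sum of two elements of $AB$ inside a single product of the form $(2x_1)y_1$. Once this is in hand, the three absorptions collapse the right-hand expansion to the left-hand one, proving the identity.
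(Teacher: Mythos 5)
Your proof is correct and follows essentially the same route as the paper's: expand both sides by the Minkowski definitions and absorb the redundant terms $(b+c)A$ and $A(B+C)$ into $bA+cA$ and $AB+AC$ respectively. The only differences are cosmetic: you justify the containments by element-wise splitting where the paper uses $\left\vert b+c\right\vert \le 2\max(\left\vert b\right\vert ,\left\vert c\right\vert )$ together with $2A=A$, and you make explicit the verification that $AB$ is again a magnitude, a point the paper leaves implicit in the definition of the product on $Q$.
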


\begin{proof}
Because $F$ is a field $a\left(  b+c\right)  =ab+ac$. Furthermore $A\left(
b+c\right)  \subseteq bA+cA$, because $\left \vert b+c\right \vert \leq
2\max(\left \vert b\right \vert ,\left \vert c\right \vert )$, $2A=A$ and
$bA+cA=\max(\left \vert b\right \vert ,\left \vert c\right \vert )A$. Also we have
the identity of groups $A\left(  B+C\right)  =AB+AC$. Hence%
\begin{align*}
&  \alpha \left(  \beta+\gamma \right)  +A\beta+A\gamma \\
&  =\left(  a+A\right)  \left(  b+c+B+C\right)  +A(b+B)+A(c+C)\\
&  =a\left(  b+c\right)  +a\left(  B+C\right)  +A\left(  b+c\right)  +A\left(
B+C\right)  +bA+AB+cA+AC\\
&  =ab+ac+aB+aC+bA+cA+AB+AC\\
&  =\left(  a+A\right)  b+\left(  a+A\right)  B+\left(  a+A\right)  c+\left(
a+A\right)  C\\
&  =\left(  a+A\right)  \left(  b+B\right)  +\left(  a+A\right)  \left(
c+C\right) \\
&  =\alpha \beta+\alpha \gamma \text{.}%
\end{align*}

\end{proof}

Next proposition shows that the correction term in the adapted version of
distributivity is a magnitude.

\begin{proposition}
\label{e(a)b=d}Let $\alpha=a+A,\beta=b+B\in Q$. Then there exists $\delta \in
Q$ such that $e\left(  \alpha \right)  \beta=e\left(  \delta \right)  $.
\end{proposition}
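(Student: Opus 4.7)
The plan is to unwind what the conclusion actually asks for: since $e(\delta)$ is by construction a magnitude (a convex additive subgroup of $F$), the statement is equivalent to showing that the set $e(\alpha)\beta = A(b+B) = Ab + AB$ itself belongs to $\mathcal{C}$; once that is done we may simply take $\delta = A\beta$ and obtain $e(\delta)=\delta=A\beta=e(\alpha)\beta$.

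First I would dispose of the degenerate cases. If $A=\{0\}$ then $A\beta=\{0\}\in \mathcal{C}$ and there is nothing to prove. If $b=0$ and $B=\{0\}$ then $\beta=\{0\}$ and again $A\beta=\{0\}$. So we may assume $A\neq \{0\}$. Next, $Ab=\{xb\mid x\in A\}$ is a convex additive subgroup of $F$ (it is the image of $A$ under the order-automorphism $x\mapsto xb$ if $b\neq 0$, and equals $\{0\}$ otherwise). Then, using formula (\ref{sum is one}), once $AB$ is shown to be a magnitude the sum $Ab+AB$ will automatically be one of the two, hence a magnitude as required.

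The real content, and the main obstacle, is therefore to verify that the Minkowski product $AB=\{xy\mid x\in A,\ y\in B\}$ is itself a convex additive subgroup of $F$. Clearly $0\in AB$ and $AB=-AB$. For convexity, take $x\in A$, $y\in B$, and $d\in F$ with $|d|\leq |xy|$; if $x\neq 0$ we can write $d=x\cdot(d/x)$, and since $|d/x|\leq |y|$ with $y\in B$, convexity of $B$ gives $d/x\in B$, hence $d\in AB$. For closure under addition, given $x_1y_1,x_2y_2\in AB$ with (WLOG) $|x_1y_1|\geq |x_2y_2|$, note that $|x_1y_1+x_2y_2|\leq 2|x_1y_1|=|(2x_1)y_1|\in AB$ since $2x_1\in A$; convexity of $AB$, just established, then places $x_1y_1+x_2y_2$ in $AB$.

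With $AB\in \mathcal{C}$ and $Ab\in \mathcal{C}$ in hand, (\ref{sum is one}) gives $A\beta = Ab+AB\in \mathcal{C}\subseteq Q$, so $\delta := A\beta$ is an element of $Q$ whose associated magnitude is $\delta$ itself, i.e.\ $e(\delta)=\delta=e(\alpha)\beta$, finishing the proof. The only step requiring genuine work is the verification that $AB$ is a convex additive subgroup; everything else is bookkeeping using properties already established in the excerpt.
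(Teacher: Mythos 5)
Your proof is correct and follows essentially the same route as the paper: the paper simply puts $\delta = bA+AB$ and observes $e(\alpha)\beta = bA+AB = \delta = e(\delta)$. The only difference is that you explicitly verify that $Ab$ and $AB$ (hence, by (\ref{sum is one}), their sum) are convex additive subgroups of $F$, a fact the paper leaves implicit since it is already needed for the well-definedness of the Minkowski product on $Q$; your verification of it is sound.
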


\begin{proof}
Put $\delta=bA+AB$. One has%
\[
e\left(  \alpha \right)  \beta=A\left(  b+B\right)  =bA+AB=\delta=e\left(
\delta \right)  \text{.}%
\]

\end{proof}

It is not difficult to determine the magnitudes of a product, the unity
element and the inverse of a zeroless element in $Q$. In fact we have the
following proposition.

\begin{proposition}
\label{Prop Archim magnitude product}Let $\alpha,\beta \in Q$. Then

\begin{enumerate}
\item \label{Magnitude product}$e(\alpha \beta)=\alpha e(\beta)+\beta
e(\alpha)$.

\item \label{s(ab)=as(b)}$-\left(  \alpha \beta \right)  =\left(  -\alpha
\right)  \beta=\alpha \left(  -\beta \right)  $.
\end{enumerate}
\end{proposition}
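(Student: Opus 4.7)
The plan is to do direct computation from the Minkowski formulas
\[
\alpha+\beta = a+b+A+B, \qquad \alpha\beta = ab+aB+bA+AB,
\]
exploiting only the subgroup property of magnitudes: for any magnitude $M$ one has $-M=M$ and $M+M=M$, and the product of two magnitudes is again a magnitude (so in particular $AB+AB=AB$ and $-(AB)=AB$).

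For part (1), I would first read off from the product formula that $\alpha\beta = ab + (aB+bA+AB)$, whose magnitude part is $aB+bA+AB$; by the characterization of $e(\alpha\beta)$ as the unique magnitude in the coset decomposition (established in Section~\ref{Section Assemblies}) this yields $e(\alpha\beta)=aB+bA+AB$. I would then compute each term of the right-hand side of the identity: writing $e(\beta)=0+B$ and applying the product formula gives $\alpha\, e(\beta) = (a+A)(0+B) = aB+AB$, and symmetrically $\beta\, e(\alpha) = bA+AB$. Adding the two and collapsing $AB+AB$ to $AB$ produces $aB+bA+AB$, matching $e(\alpha\beta)$.

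For part (2), I would use $-A=A$ so that $-\alpha = -a+A$. Applying the product formula, $(-\alpha)\beta = -ab + (-a)B + bA + AB$; since $B$ is a subgroup, $(-a)B = -(aB) = aB$, giving $(-\alpha)\beta = -ab + aB + bA + AB$. On the other hand $-(\alpha\beta) = -ab - aB - bA - AB$, and each of $-aB$, $-bA$, $-AB$ equals its positive counterpart because $aB$, $bA$, $AB$ are magnitudes. Hence $-(\alpha\beta) = (-\alpha)\beta$, and the equality with $\alpha(-\beta)$ then follows by the symmetric computation, or directly from commutativity of multiplication.

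There is essentially no serious obstacle here; the proof is a bookkeeping exercise. The only thing to be careful about is to apply $-X=X$ and $X+X=X$ systematically to every subgroup object appearing in the expansion, including the scaled magnitudes $aB$, $bA$ and the product magnitude $AB$, none of which should be treated like ordinary field elements.
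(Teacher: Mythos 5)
Your proposal is correct and follows essentially the same route as the paper: both parts are direct computations with the Minkowski product formula, using that magnitudes are subgroups so that $M+M=M$ and $-M=M$ for each of $aB$, $bA$, $AB$. The paper's proof is just a more compressed version of the same bookkeeping.
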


\begin{proof}
Let $\alpha=a+A,\beta=b+B\in Q$.

\ref{Magnitude product}. One has $\beta e\left(  \alpha \right)  +\alpha
e\left(  \beta \right)  =A\left(  b+B\right)  +\left(  a+A\right)
B=bA+AB+aB+AB=e\left(  ab+bA+aB+AB\right)  =e\left(  \alpha \beta \right)  .$

\ref{s(ab)=as(b)}. This is evident, because $-\left(  ab+bA+aB+AB\right)
=\left(  -a+A\right)  \left(  b+B\right)  =\left(  a+A\right)  \left(
-b+B\right)  $.
\end{proof}

Structures with the properties given by Proposition
\ref{formula dist total cosets}, \ref{e(a)b=d} and
\ref{Prop Archim magnitude product} and formulas (\ref{e(u(alpha))}) and
(\ref{e(d(alpha))}) will be called associations. Let $\mathcal{A}$ be an
assembly. We denote by $\mathcal{N}$ the set of all elements of $\mathcal{A}$
which are not zeroless.

\begin{definition}
\label{Association A}A structure $(\mathcal{A},+,\cdot)$ is called an
\emph{association} if the structures $(\mathcal{A},+)$ and $(\mathcal{A}%
\backslash \mathcal{N},\cdot)$ are both assemblies and if the following hold:

\begin{enumerate}
\item \label{distributivity}$\forall x\forall y\forall z\left(  xy+xz=x\left(
y+z\right)  +e\left(  x\right)  y+e\left(  x\right)  z\right)  .$

\item \label{Escala}$\forall x\forall y\exists z(e(x)y=e(z)).$

\item \label{e(xy)=e(x)y+e(y)x}$\forall x\forall y\left(
e(xy)=e(x)y+e(y)x\right)  .$

\item \label{e(u)=e(x)d(x)}$\forall x\neq e(x)\left(  e(u(x))=e(x)d(x)\right)
.$

\item \label{s(xy)=s(x)y}$\forall x\forall y\left(  s(xy)=s(x)y\right)  .$
\end{enumerate}
\end{definition}

\begin{theorem}
\label{Q Association}The structure $\left(  Q,+,e,s,\cdot,u,d\right)  $ is an association.
\end{theorem}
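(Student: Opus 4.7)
The plan is straightforward: every clause of Definition~\ref{Association A} has already been established in the preceding material, so the proof reduces to matching each axiom to its source and checking one compatibility. The first observation is that the non-zeroless elements of $Q$ are exactly the magnitudes, i.e.\ $\mathcal{N}=\mathcal{C}$; Proposition~\ref{Q assembleia} and Proposition~\ref{Q* assembleia} then provide the two assembly structures $(Q,+)$ and $(Q\backslash\mathcal{C},\cdot)$ demanded by the definition.

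Next I would verify the five mixed conditions by pointing to earlier results. Condition~(\ref{distributivity}), the adapted distributive law, is exactly Proposition~\ref{formula dist total cosets}. Condition~(\ref{Escala}), asserting that $e(x)y$ is itself a magnitude, is exactly Proposition~\ref{e(a)b=d}. Condition~(\ref{e(xy)=e(x)y+e(y)x}) on the magnitude of a product is part~(\ref{Magnitude product}) of Proposition~\ref{Prop Archim magnitude product}, and condition~(\ref{s(xy)=s(x)y}) on the symmetric of a product is part~(\ref{s(ab)=as(b)}) of that same proposition.

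The only clause that requires a brief translation is condition~(\ref{e(u)=e(x)d(x)}): for zeroless $\alpha$, formula~(\ref{e(u(alpha))}) gives $e(u(\alpha))=e(\alpha)/\alpha$, while by definition $d(\alpha)=1/\alpha$, so $e(\alpha)d(\alpha)=e(\alpha)/\alpha=e(u(\alpha))$, as required. I expect no real obstacle here: the whole section has been arranged so that this theorem is essentially an accounting step. The only thing to double-check is that the domain restriction $x\neq e(x)$ in condition~(\ref{e(u)=e(x)d(x)}) matches the domain on which $u$ and $d$ were defined, namely the zeroless elements $Q\backslash\mathcal{C}$; this is immediate because $x=e(x)$ characterises the magnitudes.
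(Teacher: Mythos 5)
Your proposal is correct and matches the paper's own proof, which likewise disposes of the theorem by citing Propositions \ref{Q assembleia}, \ref{Q* assembleia}, \ref{formula dist total cosets}, \ref{e(a)b=d} and \ref{Prop Archim magnitude product} together with formulas (\ref{e(u(alpha))}) and (\ref{e(d(alpha))}). Your extra remarks identifying $\mathcal{N}$ with $\mathcal{C}$ and checking the domain restriction $x\neq e(x)$ are accurate bookkeeping that the paper leaves implicit.
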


\begin{proof}
Directly from Proposition \ref{Q assembleia}, Proposition \ref{Q* assembleia},
formulas (\ref{e(u(alpha))}) and (\ref{e(d(alpha))}), Proposition
\ref{formula dist total cosets}, Proposition \ref{e(a)b=d} and Proposition
\ref{Prop Archim magnitude product}.
\end{proof}

Theorem \ref{total order rel} shows that it is possible to introduce in $Q$ a
total order compatible with the operations. We also have to specify the order
relation between the magnitudes and general elements. We already saw that if
$A,B$ are magnitudes such that $A+B=A,$ then $B\leq A$. Condition
(\ref{e(x)maiory}) of the next definition generalizes this to arbitrary
elements. Structures satisfying the above properties are called ordered associations.

\begin{definition}
\label{Definition Ordered assembly}We say that a structure $\left(
\mathcal{A},+,\cdot,\leq \right)  $ is an \emph{ordered association} if
$\left(  \mathcal{A},+,\cdot \right)  $ is an association, $\leq$ is a total
order relation and the following hold:

\begin{enumerate}
\item \label{comp addition}$\forall x\forall y\forall z\left(  x\leq
y\Rightarrow x+z\leq y+z\right)  .$

\item \label{e(x)maiory}$\forall x\forall y\left(  y+e(x)=e(x)\Rightarrow
y\leq e(x)\wedge s\left(  y\right)  \leq e\left(  x\right)  \right)  .$

\item \label{comp mult}$\forall x\forall y\forall z\left(  \left(  e\left(
x\right)  <x\wedge y\leq z\right)  \Rightarrow xy\leq xz\right)  .$

\item \label{Amplification}$\forall x\forall y\forall z\left(  \left(
e\left(  y\right)  \leq y\leq z\right)  \Rightarrow e\left(  x\right)  y\leq
e\left(  x\right)  z\right)  .$
\end{enumerate}
\end{definition}

\begin{theorem}
\label{Q ordered association}The structure $\left(  Q,+,e,s,\cdot
,u,d,\leq \right)  $ is an ordered association.
\end{theorem}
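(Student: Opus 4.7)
The plan is to observe that almost every clause of Definition \ref{Definition Ordered assembly} has already been established, and to isolate the single new ingredient, condition \ref{e(x)maiory}, which we verify by direct unfolding of the definition of the order.

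First I would invoke Theorem \ref{Q Association} to get that $(Q,+,e,s,\cdot,u,d)$ is an association, and Theorem \ref{total order rel} to get that $\leq$ is a total order on $Q$. The three compatibility conditions of Theorem \ref{total order rel} are, in the same order, precisely clauses \ref{comp addition}, \ref{comp mult} and \ref{Amplification} of Definition \ref{Definition Ordered assembly}, so nothing more is needed for those.

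It remains to prove clause \ref{e(x)maiory}: if $y+e(x)=e(x)$ then $y\leq e(x)$ and $s(y)\leq e(x)$. Writing $A=e(x)\in\mathcal{C}$ and $y=b+B$ with $B\in\mathcal{C}$, the hypothesis $b+B+A=A$ forces, by the dichotomy (\ref{sum is one}), $B\subseteq A$ and $b\in A$. Hence $y=b+B\subseteq A+A=A$, and likewise $s(y)=-b+B\subseteq A$, since $A$ is a group. From $y\subseteq A$, every $w\in y$ lies in $A$, so taking the witness $w$ itself inside $A$ yields $y\leq A=e(x)$ by Definition \ref{order external numbers}; the same argument applied to $s(y)\subseteq A$ gives $s(y)\leq e(x)$.

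I do not expect a real obstacle here: the heavy lifting (totality and the three compatibilities) is already done in Theorem \ref{total order rel}, and condition \ref{e(x)maiory} is essentially a tautology once one recalls that magnitudes are convex subgroups containing $0$ and that the order on $Q$ was defined pointwise. The only subtlety to watch for is the symmetric part $s(y)\leq e(x)$, which depends crucially on $A$ being closed under negation.
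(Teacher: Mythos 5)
Your proposal is correct and follows essentially the same route as the paper: both reduce the claim via Theorem \ref{Q Association} and Theorem \ref{total order rel} to verifying condition (\ref{e(x)maiory}), and both do so by showing the coset $y$ is contained in the magnitude $e(x)$ and reading off $y\leq e(x)$ from Definition \ref{order external numbers}. The only cosmetic difference is in the symmetric part: you use directly that a magnitude is closed under negation, while the paper derives $-y\subseteq e(x)$ from $-y=y-(y+y)\subseteq e(x)-e(x)=e(x)$; both are valid.
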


\begin{proof}
By Theorem \ref{total order rel} and Theorem \ref{Q Association} we only need
to show that condition (\ref{e(x)maiory}) is satisfied. Let $\alpha
=a+A,\beta=b+B\in Q$. Assume that $\alpha+B=B$, i.e. $a+A+B=B$. Then
$A\subseteq B$, so $a+A\subseteq B$. Hence $\alpha \leq B$. Also $\alpha
+\alpha \subseteq B+B=B$. Then $-\alpha=\alpha-\left(  \alpha+\alpha \right)
\subseteq B-B=B$. Hence $-\alpha \leq B$.
\end{proof}

\section{Solids\label{Section Solids}}

Clearly rings with unity are associations and the same is true for fields. As
will be shown below associations with a unique magnitude are fields.

In order to distinguish fields from associations we will postulate the
existence of particular elements. The resulting structure will be called a
solid. We finish by proving that the quotient class of a non-archimedean field
is a solid.

\begin{proposition}
Let $\left(  \mathcal{A},+,\cdot \right)  $ be an association with a unique
magnitude $e$ then $\left(  \mathcal{A},+,\cdot \right)  $ is a ring.
Furthermore if it has a unique unity $u$ then $\left(  \mathcal{A}%
,+,\cdot,e,u\right)  $ is a field.
\end{proposition}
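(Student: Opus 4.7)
The plan is to show that the two association axioms that would ordinarily prevent an association from being a ring --- the correction term $e(x)y+e(x)z$ in distributivity (axiom \ref{distributivity}) and the restriction of the multiplicative assembly structure to $\mathcal{A}\setminus\mathcal{N}$ --- both collapse to the familiar ring axioms once there is only one magnitude available.

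First I would establish the additive group structure. The image of the magnitude operator $e\colon\mathcal{A}\to\mathcal{A}$ consists, by definition, of magnitudes, so uniqueness forces $e(x)=e$ for every $x\in\mathcal{A}$. Axiom \ref{assemblyneut copy(1)} of Definition \ref{AssemblyA} then reads $x+e=x$ for all $x$, so $e$ is the usual additive neutral element; axiom \ref{assemblysim copy(1)} produces $s(x)$ with $x+s(x)=e$ and $e(s(x))=e$, so $s(x)$ is a genuine additive inverse. Combined with the commutativity axiom, $(\mathcal{A},+)$ is an abelian group.

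Next I would show that $e$ is absorbing for multiplication and thereby recover full distributivity. By axiom \ref{Escala}, for every $y$ there is $z$ with $e\cdot y=e(z)$; since the only magnitude is $e$ itself, $e\cdot y=e$ for all $y$. Substituting this into axiom \ref{distributivity},
\[
xy+xz \;=\; x(y+z)+e(x)y+e(x)z \;=\; x(y+z)+e+e \;=\; x(y+z),
\]
which is ordinary distributivity. Associativity and commutativity of multiplication on $\mathcal{A}\setminus\mathcal{N}=\mathcal{A}\setminus\{e\}$ follow from that structure being an assembly, and both extend to the absorbing element $e$ trivially. Together with the abelian group structure, this makes $(\mathcal{A},+,\cdot)$ a commutative ring.

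For the field claim I would argue analogously with the unity operator $u$. If the unity is unique then $u(x)=u$ for every zeroless $x$, so that $xu=x$ for all $x\neq e$ by axiom \ref{assemblyneut copy(1)} applied to $(\mathcal{A}\setminus\{e\},\cdot)$; and $eu=e$ by absorption, so $u$ is a two-sided multiplicative identity. For any $x\neq e$ the inverse operator $d$ of the multiplicative assembly gives $d(x)$ with $x\cdot d(x)=u(x)=u$, so every nonzero element is invertible and $(\mathcal{A},+,\cdot,e,u)$ is a field. The main piece of bookkeeping to watch is the coherent extension of operations originally defined only on $\mathcal{A}\setminus\mathcal{N}$ to the element $e$; but once axiom \ref{Escala} delivers $e\cdot y=e$ for all $y$, this extension is automatic and no real obstacle arises.
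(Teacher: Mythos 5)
Your proposal is correct and follows essentially the same route as the paper's proof: uniqueness of the magnitude makes $e(x)=e$ the additive neutral element, axiom (\ref{Escala}) forces $e\cdot y=e$, the correction term in the adapted distributivity law then vanishes, and the field claim reduces to the observation that $(\mathcal{A}\backslash\mathcal{N},\cdot)$ is a group with identity $u$. Your version merely spells out a few steps (additive inverses, the extension of the multiplicative laws to $e$) that the paper leaves implicit.
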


\begin{proof}
With respect to the first part we only need to show that the magnitude is the
neutral element and that distributivity holds. Let $x\in \mathcal{A}$ and let
$e$ be the unique magnitude in $\mathcal{A}$. Then $x=x+e\left(  x\right)
=x+e$. Hence $e$ is the neutral element for addition. Observe that $ex=e$, by
condition (\ref{Escala})\ of Definition \ref{Association A}. To prove
distributivity let $x,y,z\in \mathcal{A}$. Then $xy+xz=x\left(  y+z\right)
+e\left(  x\right)  y+e\left(  x\right)  z=x\left(  y+z\right)  +e+e=x\left(
y+z\right)  $. Hence $\left(  \mathcal{A},+,\cdot \right)  $ is a ring.

To prove the second part note that $u\neq e$ because $u$ is not a magnitude.
As above, $\left(  \mathcal{A}\backslash \mathcal{N},\cdot,u\right)  $ is a
group. Hence $\left(  \mathcal{A},+,\cdot,e,u\right)  $ is a field.
\end{proof}

\begin{definition}
\label{Definition solid}A structure $(S,+,\cdot,\leq)$ is called a
\emph{solid} if $(S,+,\cdot,\leq)$ is an ordered association such that the
following hold:

\begin{enumerate}
\item \label{neut min}$\exists m\forall x\left(  m+x=x\right)  .$

\item \label{neut max}$\exists M\forall x(e\left(  x\right)  +M=M).$

\item \label{neut mult}$\exists u\forall x\left(  ux=x\right)  .$

\item \label{decomposition}$\forall x\exists a\left(  x=a+e\left(  x\right)
\wedge e\left(  a\right)  =0\right)  .$

\item \label{existencia magnitudes}$\exists x\left(  e\left(  x\right)  \neq
m\wedge e\left(  x\right)  \neq M\right)  .$

\item \label{scheiding neutrices}$\forall x\forall y(x=e\left(  x\right)
\wedge y=e(y)\wedge x<y\rightarrow \exists z(z\neq e(z)\wedge x<z<y)).$
\end{enumerate}
\end{definition}

Conditions (\ref{neut min}) and (\ref{neut mult}) are completion properties in
the sense that they postulate the existence of (minimal) neutral elements for
addition and multiplication (corresponding to $0$ and $1$ in groups and
fields). Condition (\ref{neut max}) postulates the existence of a maximal
individualized neutral element (denoted $M$). The existence of such an
absorber is a common procedure in semigroups where it is called "zero element"
(see for example \cite[p. 2]{Howie}). In the case of the structure $\left(
Q,+,e,s,\cdot,u,d,\leq \right)  $ it is the field $F$ which is the largest
magnitude. Condition (\ref{decomposition}) allows to decompose each element in
terms of an element with minimal neutral element ("precise element") and an
individualized neutral element, like the representation $\alpha=a+A$ in $Q$.
We may identify $a$ with an element of $F$. Condition
(\ref{existencia magnitudes}) postulates the existence of nontrivial neutral
elements, i.e. neutral elements besides $m$ and $M$ and has as a consequence
that effectively solids have a richer structure than fields. Condition
(\ref{scheiding neutrices}) avoids "gaps" in the sense that two magnitudes are
separated by an element which is not a magnitude.

\begin{theorem}
\label{Q solid}The structure $\left(  Q,+,\cdot,\leq \right)  $ is a solid.
\end{theorem}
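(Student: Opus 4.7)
The plan is to appeal to Theorem \ref{Q ordered association}, which already furnishes the ordered association structure on $(Q,+,e,s,\cdot,u,d,\leq)$, and then verify the six additional clauses of Definition \ref{Definition solid}. For the three completion axioms I would exhibit explicit witnesses: $m=\{0\}$, $M=F$, and $u=\{1\}$. Then $\{0\}+(a+A)=a+A$ gives (\ref{neut min}); every $A\in\mathcal{C}$ is contained in $F$, so convexity yields $A+F=F$, giving (\ref{neut max}); and $\{1\}\cdot(a+A)=a+A$ gives (\ref{neut mult}). Clause (\ref{decomposition}) is immediate from the coset description: given $\alpha=a+A$, identify $a\in F$ with $\{a\}\in Q$, note $e(\{a\})=\{0\}=m$, and write $\alpha=\{a\}+e(\alpha)$.

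Clause (\ref{existencia magnitudes}) is where the hypothesis that $F$ is non-archimedean enters: the set $\oslash$ of infinitesimals is a magnitude strictly between $\{0\}$ and $F$ (the latter because $1\notin\oslash$), so taking $\alpha=\oslash$ yields $e(\alpha)\neq m,M$.

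The bulk of the work is clause (\ref{scheiding neutrices}). Given magnitudes $A,B\in\mathcal{C}$ with $A<B$ --- which on magnitudes amounts to $A\subsetneq B$, since the order restricted to $\mathcal{C}$ is inclusion --- I would pick $b\in B\setminus A$, and replace $b$ by $-b$ if necessary (both lie in $B$ and outside $A$, since $B$ and $A$ are subgroups) so that $b>0$. Take $z:=b+A\in Q$. Then $z$ is not a magnitude, since $b\notin A$ forces $0\notin z$, and hence $z\neq e(z)=A$. The relation $A<z$ is checked directly: $a\leq b+a\in z$ for every $a\in A$ by positivity of $b$, and $A\cap z=\emptyset$ by the subgroup property of $A$; the relation $z<B$ follows because $b+a\leq|b|+|a|\in B$ for every $a\in A$ while $0\in B\setminus z$.

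The main obstacle is this last clause: a priori there need not be any magnitude strictly between $A$ and $B$, so the separator must be a zeroless coset, and the insight is that a translate of the smaller magnitude $A$ by a positive witness of $B\setminus A$ does the job. The remaining clauses are routine identifications of $\{0\}$, $\{1\}$ and the absorbing magnitude $F$.
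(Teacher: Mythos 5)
Your proof is correct and follows essentially the same route as the paper: reduce to Theorem \ref{Q ordered association}, take $m=\{0\}$, $M=F$, $u=\{1\}$, use $\oslash$ for the nontrivial magnitude, and separate $A\subsetneq B$ by a positive $b\in B\setminus A$. The only (harmless) variations are that you use the coset $b+A$ where the paper uses the singleton $\{b\}$, and that you make explicit the sign adjustment of $b$ that the paper leaves implicit.
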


\begin{proof}
By Theorem \ref{Q ordered association} we only need to verify that conditions
(\ref{neut min})-(\ref{existencia magnitudes}) of Definition
\ref{Definition solid} are satisfied.

Condition (\ref{decomposition}) is satisfied by construction. Conditions
(\ref{neut min})-(\ref{neut mult}) are satisfied taking $m=\left \{  0\right \}
$, $M=F$ and $u=\left \{  1\right \}  $. A non-archimedean ordered field
necessarily has infinitesimals other than $0$. Let $\oslash$ denote the set of
all infinitesimals in $F$. It is clearly convex and satisfies the group
property. Also $\oslash \neq \left \{  0\right \}  $ and $\oslash \neq F$, so
condition (\ref{existencia magnitudes}) also holds. To show that condition
(\ref{scheiding neutrices}) holds let $A,B$ be magnitudes in $Q$ such that
$A\subset B$. Let $b\in B\backslash A$. Then $A<b<B$.
\end{proof}

\begin{remark}
\emph{Due to the existence of non-trivial magnitudes, within ordinary set
theory }$ZFC$\emph{ any solid must be non-archimedean. Indeed, let }$x$\emph{
be such that }$0<e(x)<M$\emph{. By Definition \ref{Definition solid}%
.\ref{scheiding neutrices} there exist }$y$\emph{ such that }$e(x)<y<M$\emph{.
Then }$e^{\prime}(x)\equiv e(x)/y<1$\emph{. Now }$e^{\prime}(x)+e^{\prime
}(x)=e^{\prime}(x)$\emph{, and because the induction scheme holds, one obtains
that }$ne^{\prime}(x)=e^{\prime}(x)$\emph{ for all }$n\in \mathbb{N}$\emph{. As
a consequence }$ne^{\prime}(x)<1$\emph{ for all }$n\in \mathbb{N}$\emph{.
However, there exists also an Archimedean field such that the quotient class
with respect to its magnitudes is a solid. Such a solid exists within the
axiomatic approach to Nonstandard Analysis }$IST$\emph{\ (Internal Set Theory)
of Nelson \cite{NelsonIST}. In this approach the set of all real numbers
}$\mathbb{R}$\emph{\ is Archimedean, the axiomatics distinguishes "standard"
natural numbers and "nonstandard" natural numbers within }$\mathbb{N}$\emph{,
the latter numbers being always larger than the first. Then there exist (many)
convex ordered groups within }$\mathbb{R}$\emph{\ which are not reduced to
}$\left \{  0\right \}  $\emph{\ and }$\mathbb{R}$\emph{\ itself, like the set
of all infinitesimals. It has to be noted that they are "external sets" in the
sense of the extended axiomatics }$HST$\emph{\ presented in
\cite{KanoveiReeken}. They were called "(scalar) neutrices" in
\cite{koudjetivandenberg}, after the functional neutrices of Van der Corput
\cite{Van der Corput}. In \cite{koudjetivandenberg} a (mostly external) coset
with respect to a convex ordered subgroup within }$\mathbb{R}$\emph{\ was
called "external number". The external set of all possible external numbers
was shown to be an assembly for addition in \cite{dinisberg} and a solid in
\cite{Dinis}.}
\end{remark}

The existence of solids in different settings suggests that it is worthwhile
to investigate the algebraic properties of solids. In particular we are able
to give necessary and sufficient conditions for distributivity to hold. The
proof is rather involved and requires a thorough investigation in the algebra
of magnitudes. These results are presented in \cite{dinisberg2}.

\appendix

\section{List of axioms}

\label{Section Axioms}

The first and second group of axioms are the algebraic laws of an additive,
respectively multiplicative, assembly. The third group of axioms states that
there is a total order relation compatible with addition and multiplication,
with some particular rules for the magnitudes. The fourth group of axioms
connects addition and multiplication, together with the first three groups
they give the algebraic laws of an ordered association. The fifth group
permits to distinguish solids from associations, by postulating the existence
of particular elements: minimal neutral elements for addition and
multiplication, a maximal neutral element for addition, a decomposition,
nontrivial magnitudes and finally elements separating two magnitudes. The
axioms are written in the first-order language $L=\left \{  +,\cdot
,\leq \right \}  $.

\begin{remark}
\label{e is function}\emph{The functional notation for magnitudes is justified
by the fact that the element }$e$\emph{ of Axiom \ref{assemblyneut} is unique.
Indeed, if }$e^{\prime}$\emph{ satisfies Axiom \ref{assemblyneut}, one has
}$e^{\prime}=e^{\prime}+e=e+e^{\prime}=e$\emph{. Also }$s$\emph{ is unique and
may be considered functional. Indeed, if }$s^{\prime}$\emph{ satisfies Axiom
\ref{assemblysim} one has }$s^{\prime}=s^{\prime}+e(s^{\prime})=s^{\prime
}+e(x)=s^{\prime}+x+s=x+s^{\prime}+s=e(x)+s=e(s)+s=s$\emph{. In fact we will
use the notation }$-x$\emph{ for }$s\left(  x\right)  $\emph{. The functional
notation for unities is justified in an analogous way where we will use }%
$/x$\emph{ instead of }$d\left(  x\right)  .$\emph{ }
\end{remark}

\begin{enumerate}
\item \textbf{Axioms for addition}

\begin{axiom}
\label{assemblyassoc}$\forall x\forall y\forall z(x+\left(  y+z\right)
=\left(  x+y\right)  +z).$
\end{axiom}

\begin{axiom}
\label{assemblycom}$\forall x\forall y(x+y=y+x).$
\end{axiom}

\begin{axiom}
\label{assemblyneut}$\forall x\exists e\left(  x+e=x\wedge \forall f\left(
x+f=x\rightarrow e+f=e\right)  \right)  .$
\end{axiom}

\begin{axiom}
\label{assemblysim}$\forall x\exists s\left(  x+s=e\left(  x\right)  \wedge
e\left(  s\right)  =e\left(  x\right)  \right)  .$
\end{axiom}

\begin{axiom}
\label{assemblye(xy)}$\forall x\forall y\left(  e\left(  x+y\right)  =e\left(
x\right)  \vee e\left(  x+y\right)  =e\left(  y\right)  \right)  .$
\end{axiom}

\item \textbf{Axioms for multiplication}

\begin{axiom}
\label{axiom assoc mult}$\forall x\forall y\forall z(x\left(  yz\right)
=\left(  xy\right)  z).$
\end{axiom}

\begin{axiom}
\label{axiom com mult}$\forall x\forall y(xy=yx).$
\end{axiom}

\begin{axiom}
\label{axiom neut mult}$\forall x\neq e\left(  x\right)  \exists u\left(
xu=x\wedge \forall v\left(  xv=x\rightarrow uv=u\right)  \right)  .$
\end{axiom}

\begin{axiom}
\label{axiom sym mult}$\forall x\neq e\left(  x\right)  \exists d\left(
xd=u\left(  x\right)  \wedge u\left(  d\right)  =u\left(  x\right)  \right)
.$
\end{axiom}

\begin{axiom}
\label{axiom u(xy)}$\forall x\neq e\left(  x\right)  \forall y\neq e\left(
y\right)  \left(  u\left(  xy\right)  =u\left(  x\right)  \vee u\left(
xy\right)  =u\left(  y\right)  \right)  .$
\end{axiom}

\item \textbf{Order axioms}

\begin{axiom}
\label{(OA)reflex}$\forall x(x\leq x).$
\end{axiom}

\begin{axiom}
\label{(OA)antisym}$\forall x\forall y(x\leq y\wedge y\leq x\rightarrow x=y).
$
\end{axiom}

\begin{axiom}
\label{(OA)trans}$\forall x\forall y\forall z(x\leq y\wedge y\leq z\rightarrow
x\leq z).$
\end{axiom}

\begin{axiom}
\label{(OA)total}$\forall x\forall y(x\leq y\vee y\leq x).$
\end{axiom}

\begin{axiom}
\label{(OA)compoper}$\forall x\forall y\forall z\left(  x\leq y\rightarrow
x+z\leq y+z\right)  .$
\end{axiom}

\begin{axiom}
\label{Axiom e(x)maiory}$\forall x\forall y\left(  y+e(x)=e(x)\rightarrow
\left(  y\leq e(x)\wedge-y\leq e(x)\right)  \right)  .$
\end{axiom}

\begin{axiom}
\label{compat mult}$\forall x\forall y\forall z\left(  \left(  e\left(
x\right)  <x\wedge y\leq z\right)  \rightarrow xy\leq xz\right)  .$
\end{axiom}

\begin{axiom}
\label{Axiom Amplification}$\forall x\forall y\forall z\left(  \left(
e\left(  y\right)  \leq y\leq z\right)  \rightarrow e\left(  x\right)  y\leq
e\left(  x\right)  z\right)  .$
\end{axiom}

\item \textbf{Axioms relating addition and multiplication}

\begin{axiom}
\label{Axiom escala}$\forall x\forall y\exists z(e(x)y=e(z)).$
\end{axiom}

\begin{axiom}
\label{Axiom e(xy)=e(x)y+e(y)x}$\forall x\forall y\left(
e(xy)=e(x)y+e(y)x\right)  .$
\end{axiom}

\begin{axiom}
\label{e(u(x))=e(x)d(x)}$\forall x\neq e(x)\left(  e(u(x))=e(x)/x\right)  .$
\end{axiom}

\begin{axiom}
\label{Axiom distributivity}$\forall x\forall y\forall z\left(  xy+xz=x\left(
y+z\right)  +e\left(  x\right)  y+e\left(  x\right)  z\right)  .$
\end{axiom}

\begin{axiom}
\label{Axiom s(xy)=s(x)y}$\forall x\forall y\left(  -(xy)=(-x)y\right)  .$
\end{axiom}

\item \textbf{Axioms of existence}

\begin{axiom}
\label{Axiom neut min}$\exists m\forall x\left(  m+x=x\right)  .$
\end{axiom}

\begin{axiom}
\label{Axiom neut mult}$\exists u\forall x\left(  ux=x\right)  .$
\end{axiom}

\begin{axiom}
\label{Axiom neut max}$\exists M\forall x(e\left(  x\right)  +M=M).$
\end{axiom}

\begin{axiom}
\label{existencia neutrices}$\exists x\left(  e\left(  x\right)  \neq0\wedge
e\left(  x\right)  \neq M\right)  .$
\end{axiom}

\begin{axiom}
\label{numexterno}$\forall x\exists a\left(  x=a+e\left(  x\right)  \wedge
e\left(  a\right)  =0\right)  .$
\end{axiom}

\begin{axiom}
\label{scheiding neutrices(1)}$\forall x\forall y(x=e\left(  x\right)  \wedge
y=e(y)\wedge x<y\rightarrow \exists z(z\neq e(z)\wedge x<z<y)).$
\end{axiom}
\end{enumerate}


\begin{thebibliography}{99}                                                                                               %


\bibitem {Aschenbrenner Van Den Dries}M. Aschenbrenner, L. Van Den Dries,
\textit{Asymptotic Differential Algebra}, in: O. Costin, M. D. Kruskal, A.
Macintyre (eds.), \textit{Analyzable Functions and Applications}, Contemp.
Math. 373, Amer. Math. Soc., (2005) 49--85.

\bibitem {Bosch Remmert}S. Bosch, U. G\"{u}ntzer, R. Remmert, Non-archimedean
analysis, Grundlehren der mathematischen Wissenschaften, Vol. 261,
Springer-Verlag, (1984).

\bibitem {Bourbaki}N. Bourbaki, \textit{Fonctions d'une Variable R\'{e}elle},
Chapitre V, Appendice \textit{Corps de Hardy. Fonctions (H)}, Hermann, Paris (1976).

\bibitem {debruijn}N.G. de Bruijn, \textit{Asymptotic Analysis}, North Holland (1961).

\bibitem {Van der Corput}J.G. van der Corput, \textit{Introduction to the
neutrix calculus}, J. Analyse Math. 7 (1959/1960) 281--399.

\bibitem {Dinis}B. Dinis, \textit{Axiomatics for the External Numbers of
Nonstandard analysis and modelation of uncertainties, }Ph.D. thesis,
Universidade de \'{E}vora, Portugal (2013).

\bibitem {dinisberg}B. Dinis, I. P. van den Berg, \textit{Algebraic properties
of external numbers}, J. Logic and Analysis 3:9 (2011) 1--30.

\bibitem {dinisberg2}B. Dinis, I. P. van den Berg, \textit{Characterization of
distributivity in a solid} (submitted).

\bibitem {Hardy}G.H. Hardy, \textit{Orders of Infinity: The
`Infinit\"{a}rcalc\"{u}l' of Paul du Bois-Reymond}, 2nd ed., Cambridge Univ.
Press, Cambridge (1924).

\bibitem {Howie}J. Howie, \textit{Fundamentals of semigroup theory}, London
Mathematical Society Monographs, new series 12, Oxford University Press (1995).

\bibitem {KanoveiReeken}V. Kanovei, M. Reeken, \textit{Nonstandard Analysis,
axiomatically}, Springer Monographs in Mathematics (2004).

\bibitem {koudjetivandenberg}F. Koudjeti, I.P. van den Berg,
\textit{Neutrices, external numbers and external calculus}, in: Nonstandard
Analysis in Practice, F. and M. Diener eds., Springer Universitext (1995) 145-170.

\bibitem {Lightstone Robinson}A. H. Lightstone, A. Robinson,
\textit{Non-archimedean fields and asymptotic expansions}, North-Holland
Mathematical Library, vol. 13, North-Holland/American Elsevier, Amsterdam and
New York (1975).

\bibitem {NelsonIST}E. Nelson, \textit{Internal Set Theory, an axiomatric
approach to nonstandard analysis},\ Bull. Am. Math. Soc., 83:6 (1977) 1165-1198.

\bibitem {Petrich}M. Petrich, N. Reilly, \textit{Completely regular
semigroups}, Canadian Mathematical Society Series of Monographs and Advanced
Texts, vol. 23, Wiley-Interscience (1999).
\end{thebibliography}
\end{document}